\newdefinition{defn}{Definition}[section]
\newdefinition{nim}[defn]{}
\newdefinition{rem}[defn]{Remark}
\newdefinition{ex}[defn]{Example}
\newtheorem{thm}{Theorem}[section]
\newtheorem{cor}[thm]{Corollary}
\newtheorem{lem}[thm]{Lemma}
\newtheorem{prop}[thm]{Proposition}
\newcommand{\tr}{\mathrm{tr}}
\newcommand{\Hc}{\mathrm{H}}
\newcommand{\Hs}{\mathrm{HS}}
\newcommand{\Ker}{\mathrm{Ker}}
\newcommand{\Ima}{\mathrm{Im}}
\newcommand{\res}{\mathrm{res}}
\begin{document}

\begin{frontmatter}



\title{Symmetric cohomology of groups as a Mackey functor}


\author{Constantin-Cosmin Todea}

\address{Department of Mathematics, Technical University of Cluj-Napoca, Str. G. Baritiu 25,
 Cluj-Napoca 400027, Romania}

\ead{Constantin.Todea@math.utcluj.ro}
\begin{abstract} Symmetric cohomology of groups, defined by M. Staic in \cite{St3D}, is similar to the way one defines the cyclic cohomology for algebras. We show that there is a well-defined restriction, conjugation and transfer map in symmetric cohomology, which form a Mackey functor under a restriction. Some new properties for the symmetric cohomology group using normalized cochains are also given.
\end{abstract}

\begin{keyword}
group, symmetric cohomology, transfer, restriction, conjugation

\MSC 20J06 \sep 18G60
\end{keyword}

\end{frontmatter}


\section{Introduction}\label{sec1}
Symmetric cohomology for groups was introduced by M. Staic in \cite{St3D} in order to associate to topological spaces some elements in the third symmetric cohomology of some groups. Further algebraic properties of symmetric cohomology of groups in low dimension were studied by the same author in \cite{Starc}. M. Singh also studies symmetric continuous cohomology of topological groups in \cite{Sin}. He shows that the symmetric continuous cohomology of a profinite group with coefficients in a discrete module is equal to the direct
limit of the symmetric cohomology of finite groups.  In Section \ref{sec2} we continue the investigation of the first symmetric cohomology of a group and we propose a new approach for defining symmetric cohomology (with coefficients in a particular $G$-module, where $G$ is a group) using normalized cochains, which seems to give some easier conditions to be verified. In Section \ref{sec3} we prove the main result of this paper, Theorem \ref{thmmackey} where we verify that the restriction, conjugation and transfer in symmetric cohomology satisfy the axioms of  a Mackey functor. Some axioms are verified in general but for a few we need a restriction. If we can drop this restriction remains an open problem, which is proposed in Remark \ref{res}. We believe that these results should be important for further developments in algebraic and topological context.

For the rest of this section we recall the definition of symmetric cohomology of groups, some notations and definitions for group cohomology. First we recall some well-known facts about ordinary group cohomology: restriction, transfer and conjugation map. We will give the explicit description of these maps using standard cochains from \cite{We}. Let $G$ be a group, let $A$ be a $G$-module and $n\geq 0$ an integer. For $G/H$ the set of left cosets of $H$ in $G$, also denoted $G=\bigcup_{c\in G/H}c$, choose once and for all a representative $\overline{c}\in c$. By convention if $c=H$ we require $\overline{c}=1$. If $g_1,\ldots,g_n$ are $n$ elements from $G$ we will use the notations
$$x_1=g_1\ldots g_n,~x_2=g_2\ldots g_n,~\ldots,~x_n=g_n.$$
Recall that the abelian group of $n$-cochains is $C^n(G,A)=\{\sigma:G^n\rightarrow A\}$ and  we define the differential $\partial^n:C^n(G,A)\rightarrow C^{n+1}(G,A)$ by
$$\partial^n(\sigma)(g_1,\ldots,g_{n+1})=$$$$g_1\sigma(g_2,\ldots,g_{n+1})+\sum_{i=1}^n(-1)^i\sigma(g_1,\ldots,g_ig_{i+1},\ldots,g_{n+1})+(-1)^{n+1}\sigma(g_1,\ldots,g_{n}).$$
The homology of the cochain complex $(C^*(G,A),\partial^*) $  is called the cohomology of $G$ with coefficients in $A$
$$\Hc^n(G,A):=\Ker\partial^n/\Ima\partial^{n-1}.$$
Let $H$ be a subgroup of $G$ and $g\in G$. Sometimes, when more groups are involved, for explicitness we will denote the differential with an index. For example $\partial_H,\partial_{{}^gH}$ are used for cohomology of the group $H$, respectively of the group ${}^gH$, where ${}^gH=gHg^{-1}$. By \cite[Proposition 2.5.1]{We} we have:
\begin{nim}\label{res}\emph{Restriction.} $$\res^G_H:\Hc^n(G,A)\rightarrow\Hc^n(H,A),~~~\res^G_H[\sigma]=[\res^G_H(\sigma)],$$
where $\res^G_H(\sigma)(h_1,\ldots,h_n)=\sigma(h_1,\ldots,h_n)$ for any $h_1,\ldots,h_n\in H$ and $\sigma\in\Ker\partial_G^n$.
\end{nim}
\begin{nim}\label{conj}\emph{Conjugation.} $$c_{g,H}:\Hc^n(H,A)\rightarrow\Hc^n(^gH,A),~~~c_{g,H}[\sigma]=[c_{g,H}(\sigma)],$$
where $c_{g,H}(\sigma)(^gh_1,\ldots,{}^gh_n)=g\sigma(h_1,\ldots,h_n)$ for any $h_1,\ldots,h_n\in H$ and $\sigma\in\Ker\partial_H^n$.
\end{nim}
\begin{nim}\label{tran}\emph{Transfer.} $$\tr_H^G:\Hc^n(H,A)\rightarrow\Hc^n(G,A),~~~\tr_H^G[\sigma]=[\tr_H^G(\sigma)],$$
where $$\tr_H^G(\sigma)(g_1,\ldots,g_n)=\sum_{c\in G/H}\overline{x_1c}\sigma(\overline{x_1c}^{-1}g_1\overline{x_2c},\overline{x_2c}^{-1}g_2\overline{x_3c},\ldots,\overline{x_nc}^{-1}g_n\overline{c})$$ for any $g_1,\ldots,g_n\in G$ and $\sigma\in\Ker\partial_H^n$. Notice that $\overline{x_ic}^{-1}g_i\overline{x_{i+1}c}\in H$ and $\overline{x_nc}^{-1}g_n\overline{c}\in H$ for any $i\in\{1,\ldots,n-1\}$.
\end{nim}

In \cite{St3D} M. Staic defines an action of $\Sigma_{n+1}$ (the symmetric group on $n+1$ letters) on $C^n(G,A)$, using a generating set of transpositions $\{(1,2);(2,3);\ldots;(n,n+1)\}$ by
\begin{nim}\label{act}
$$((1,2)\sigma)(g_1,\ldots,g_n)=-g_1\sigma(g_1^{-1},g_1g_2,g_3,\ldots,g_n),$$
$$((i,i+1)\sigma)(g_1,\ldots,g_n)=-\sigma(g_1,\ldots,g_{i-1}g_i,g_i^{-1},g_ig_{i+1},\ldots,g_n)~~~\text{for}~1<i<n,$$
$$((n,n+1)\sigma)(g_1,\ldots,g_n)=-\sigma(g_1,g_2,\ldots,g_{n-1}g_n,g_n^{-1}).$$
\end{nim}
Now \cite[Proposition 5.1]{St3D} assure us that formulas \ref{act} give a well-defined action of $\Sigma_{n+1}$ on $C^n(G,A)$ compatible with the differentials $\partial$. Hence we have a new cohomology.

\begin{defn}\cite[Definition 5.2]{St3D}\label{defsymmcohom} The subcomplex of invariants denoted $CS^n(G,A)=C^n(G,A)^{\Sigma_{n+1}}$ is called the \emph{symmetric cochain complex}. Its homology is the \emph{symmetric cohomology} of $G$ with coefficients in $A$ and is denoted $\Hs^n(G,A)=ZS^n(G,A)/BS^n(G,A).$
\end{defn}
\section{Some remarks on $\Hs^1(G,A)$ and $\Hs^2(G,A)$.}\label{sec2}
In \cite{Starc} the author gives conditions for the natural map $\Hs^n(G,A)\rightarrow\Hc^n(G,A)$ to be injective. For the first and the second symmetric cohomology these natural maps are injective in general. Similar results are obtained in \cite{Sin} for (symmetric) continuous cohomology of topological groups, denoted ($\Hs_c^n(G,A)$) $\Hc_c^n(G,A)$. We will show the easy detail, missed in both papers, that for the first symmetric cohomology  group we actually have an equality.
\begin{prop}\label{HS1} With the above notations we have an equality $\Hs^1(G,A)=\Hc^1(G,A)$. In particular, in the context of continuous symmetric topological groups (\cite[Section 3]{Sin}) we have $\Hs_c^1(G,A)=\Hc_c^1(G,A)$.
\end{prop}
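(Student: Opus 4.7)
The plan is to show directly that the symmetric condition imposed by $\Sigma_2$-invariance in degree one is automatically satisfied by every $1$-cocycle, so that $ZS^1(G,A)=Z^1(G,A)$, while the coboundaries already agree trivially. The statement for topological groups will then follow by the same argument applied to continuous cochains.

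First I would unwind the $\Sigma_2$-action on $C^1(G,A)$. From formula \ref{act} with $n=1$, the only generator $(1,2)$ acts by $((1,2)\sigma)(g)=-g\sigma(g^{-1})$. Hence $CS^1(G,A)$ is precisely the set of $\sigma:G\to A$ such that
\[
\sigma(g)+g\sigma(g^{-1})=0\quad\text{for all }g\in G.
\]

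Next I would verify that every cocycle satisfies this identity. If $\sigma\in\Ker\partial^1$, the cocycle relation reads $\sigma(g_1g_2)=g_1\sigma(g_2)+\sigma(g_1)$, i.e.\ $\sigma$ is a crossed homomorphism. Putting $g_1=g_2=1$ gives $\sigma(1)=0$; then putting $g_1=g$, $g_2=g^{-1}$ gives $0=\sigma(1)=g\sigma(g^{-1})+\sigma(g)$, which is exactly the $\Sigma_2$-invariance condition. Therefore $Z^1(G,A)\subseteq CS^1(G,A)$, and since the reverse inclusion $ZS^1(G,A)\subseteq Z^1(G,A)$ holds by definition of the symmetric subcomplex, we obtain $ZS^1(G,A)=Z^1(G,A)$.

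For the coboundaries, note that $\Sigma_1$ is trivial, so $CS^0(G,A)=C^0(G,A)=A$, and hence $BS^1(G,A)$ is the image of the restriction of $\partial^0$ to $A$. A direct check shows $(\partial^0 a)(g)=ga-a$ already lies in $CS^1(G,A)$, since $g(\partial^0a)(g^{-1})=g(g^{-1}a-a)=-(ga-a)$. Thus $BS^1(G,A)=B^1(G,A)$, and combining with the previous step yields
\[
\Hs^1(G,A)=ZS^1(G,A)/BS^1(G,A)=Z^1(G,A)/B^1(G,A)=\Hc^1(G,A).
\]
The continuous version follows verbatim by replacing cochains with continuous cochains, as in \cite[Section 3]{Sin}.

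There is no real obstacle here; the content is entirely the observation that the invariance relation $g\sigma(g^{-1})=-\sigma(g)$ is implied by the cocycle identity, together with the triviality of $\Sigma_1$ which makes $CS^0=C^0$. I expect the only thing to be careful about in the write-up is to state the $\Sigma_2$-action in degree one explicitly before invoking it.
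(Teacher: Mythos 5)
Your proof is correct and follows essentially the same route as the paper: both deduce $\sigma(1)=0$ from the cocycle identity and then substitute $g_2=g^{-1}$ to show every $1$-cocycle is automatically $\Sigma_2$-invariant, and both identify $BS^1$ with $B^1$ via $CS^0=A$. The only cosmetic difference is that the paper phrases the argument in terms of $Der(G,A)$ and $Pder(G,A)$ rather than $Z^1$ and $B^1$.
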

\begin{proof}
Recall the well-known fact $\Hc^1(G,A)=Der(G,A)/Pder(G,A)$, where $$Der(G,A)=\{\sigma:G\rightarrow A\mid \sigma(gh)=g\sigma(h)+\sigma(g)\},$$
$$Pder(G,A)=\{\sigma_a:G\rightarrow A\mid a\in A,~\sigma_a(g)=ga-a\}$$
One can see, from Definition \ref{defsymmcohom}, that $\sigma\in CS^1(G,A)$ if and only if $ \sigma(g)=-g\sigma(g^{-1})$. Therefor $\sigma\in ZS^1(G,A)$ if and only if $\sigma(gh)=g\sigma(h)+\sigma(g)$ and $\sigma(g)=-g\sigma(g^{-1})$. We will prove next that $ZS^1(G,A)=Der(G,A)$. The inclusion from left to right is trivial. Let $\sigma\in Der(G,A)$ and take $g=h=1$ in the derivation relation to obtain $\sigma(1)=\sigma(1)+\sigma(1)$, thus $\sigma(1)=0$. Now take $h=g^{-1}$ in the same derivation relation to get
$$\sigma(1)=g\sigma(g^{-1})+\sigma(g).$$
Since $\sigma(1)=0$ we obtain that $\sigma\in ZS^1(G,A)$.

One can also see from Definition \ref{defsymmcohom} that $CS^0(G,A)=A$; hence the definition of $\partial^0$ assure us that $Pder(G,A)=BS^1(G,A)$.
\end{proof}
It is known that if we use normalized cochains the group cohomology is the same.  We will show that by using normalized cochains the conditions on a symmetric cocycle and on a symmetric boundary are much easier to handle. Let $n\geq 0$ be an integer. Recall that $\Hc^n(G,A)=Z^n(G,A)/B^n(G,A)$, where $\sigma \in Z^n(G,A)$ is a normalized cocycle if $\partial_G^n(\sigma)=0$ and
$$\sigma(g_1,\ldots,g_n)=0$$
for any $g_1,\ldots,g_n\in G$ with some $g_i=1$ where $i\in\{1,\ldots,n\}$.
Similarly we kave the definition of a normalized boundary.
We say that a $n$-cochain $\sigma:G^n\rightarrow A$ satisfies the $(\ast)$ conditions if
$$\sigma(g_1,g_1^{-1},g_3,\ldots,g_n)=0$$
$$\sigma(g_1,\ldots,g_{i-1},g_i,g_i^{-1},g_{i+2},\ldots,g_n)=0~~~\text{for}~1<i<n$$
$$\sigma(g_1,\ldots,g_{n-2},g_n,g_n^{-1})=0$$
for any $g_1,\ldots,g_n\in G$. Also, we say that a $n$-boundary $\sigma:G^n\rightarrow A$ is a \emph{$n$-$(\ast)$-boundary} if there is $\beta:G^{n-1}\rightarrow A$ which satisfies the corresponding $(\ast)$ conditions such that $\partial^n(\beta)=\sigma$.  For the rest of this section we will we work with normalized $n$-cochains in the case of usual group cohomology. For $p$ an integer  we denote by $_pA$ the subgroup $\{a\in A\mid pa=0\},$ which is the $p$-torsion subgroup of $A$.
\begin{prop} Let $A$ be a $G$-module such that $_2A=0$. Then:
\begin{itemize}
\item[(a)] $ZS^n(G,A)$ is the abelian group of all normalized $n$-cocycles which satisfies $(\ast)$.
\item[(b)] $BS^n(G,A)$ is the abelian group of all normalized $n$-$(\ast)$-boundaries which satisfies $(\ast)$.
\end{itemize}
\end{prop}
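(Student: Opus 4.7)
\emph{Plan.} My approach is to treat (a) as the main technical step and to deduce (b) by running essentially the same combinatorial bookkeeping one dimension down. For the forward direction of (a) I start from $\sigma\in CS^n(G,A)$ and substitute $g_j=1$ into the invariance relations \ref{act}; each one collapses to $\sigma(\ldots,1,\ldots)=-\sigma(\ldots,1,\ldots)$, so the hypothesis $_2A=0$ forces $\sigma$ to be normalized at every position. With normalization in hand, substituting $g_{j+1}=g_j^{-1}$ in the same relations then produces a $1$ on the right-hand side that is killed by normalization, and what survives is precisely the $(\ast)$ condition at position $(j,j+1)$.

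For the backward direction of (a), I take $\sigma$ a normalized $n$-cocycle satisfying $(\ast)$ and verify its $(i,i+1)$-invariance for $1<i<n$ by evaluating $\partial^n(\sigma)=0$ at the $(n+1)$-tuple
\[
(g_1,\ldots,g_{i-1},g_i,g_i^{-1},g_ig_{i+1},g_{i+2},\ldots,g_n),
\]
with the analogous tuples $(g_1,g_1^{-1},g_1g_2,g_3,\ldots,g_n)$ and $(g_1,\ldots,g_{n-1},g_n,g_n^{-1})$ at the endpoints $i=1$ and $i=n$. The $j=i$ face produces $g_ig_i^{-1}=1$ and dies by normalization; the $j=i+1$ face produces $g_i^{-1}g_ig_{i+1}=g_{i+1}$ and reconstitutes $\pm\sigma(g_1,\ldots,g_n)$; every other face, together with the leading action term $g_1\sigma(\ldots)$ and the trailing drop-last term, leaves an undisturbed adjacent $(g_i,g_i^{-1})$ pair in the resulting $n$-tuple and dies by $(\ast)$. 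The two surviving terms then must cancel, and this cancellation is exactly the invariance relation asserted in \ref{act}. The main obstacle here is the positional bookkeeping: tracking, for each value of $j$ relative to $i$, where the pair $(g_i,g_i^{-1})$ ends up after the $j$-th face merge, and handling the endpoint cases $i=1$, $i=n$ separately since there the outermost term of $\partial^n$ plays the role of one of the surviving merges.

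For (b), the forward direction is immediate: if $\sigma=\partial^{n-1}(\beta)$ with $\beta\in CS^{n-1}(G,A)$, then the symmetric-$\Rightarrow$-normalized-plus-$(\ast)$ half of (a), which uses only symmetry and not the cocycle condition, applies both to $\beta$ and to $\sigma\in CS^n(G,A)$. For the backward direction I would not try to abstractly lift $\sigma$ to a symmetric cochain; instead I would show that the given $\beta$ from the definition of an $n$-$(\ast)$-boundary is itself already $\Sigma_n$-symmetric, so that $\beta\in CS^{n-1}(G,A)$ and hence $\sigma\in BS^n(G,A)$. Concretely, evaluating $\partial^{n-1}(\beta)$ at the tuple $(g_1,\ldots,g_i,g_i^{-1},\ldots,g_n)$ gives $\sigma(\ldots)=0$ because $\sigma$ satisfies $(\ast)$ at position $i$, and the very same face-merge analysis used in (a) backward, run at dimension $n-1$, leaves only the $j=i-1$ and $j=i+1$ merges. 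Rearranging them with a renaming of the free variable produces exactly the $(i,i+1)$-invariance of $\beta$ in $\Sigma_n$; doing this for $i=1,\ldots,n-1$, with the endpoint cases handled as above, forces $\beta\in CS^{n-1}(G,A)$.
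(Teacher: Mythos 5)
Your proposal follows the paper's own argument essentially step for step: normalization and $(\ast)$ are extracted from the $\Sigma_{n+1}$-invariance relations by substituting $1$ and $g_i^{-1}$ (using $_2A=0$), the converse is obtained by evaluating the (co)cycle/coboundary identity at the special tuples with an adjacent pair $(g_i,g_i^{-1})$ so that only the $j=i\pm1$ faces survive, and part (b) is reduced to showing the witnessing $\beta$ is itself symmetric. The only deviations (a slightly different tuple at the $i=n$ endpoint, and the explicit remark that the forward half of (a) uses only the symmetry relations) are cosmetic, so this is the same proof.
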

\begin{proof}
\begin{itemize}
\item[a)]
Let $\sigma\in ZS^n(G,A)$. Then for any $y_1,\ldots,y_{n+1}\in G$ we have
\begin{equation}\label{eq1}y_1\sigma(y_2,\ldots,y_{n+1})+\sum_{j=1}^n(-1)^j\sigma(y_1,\ldots,y_jy_{j+1},\ldots,y_{n+1})+(-1)^{n+1}\sigma(y_1,\ldots,y_n)=0
\end{equation}
and
\begin{equation}\label{eq2}\sigma(g_1,\ldots,g_n)=-g_1\sigma(g_1^{-1},g_1g_2,g_3,\ldots,g_n)
\end{equation}
\begin{equation}\label{eq3}\sigma(g_1,\ldots,g_n)=-\sigma(g_1,\ldots,g_{i-1}g_i,g_i^{-1},g_ig_{i+1},\ldots,g_n)~~~\text{for}~1<i<n
\end{equation}
\begin{equation}\label{eq4}\sigma(g_1,\ldots,g_n)=-\sigma(g_1,g_2,\ldots,g_{n-1}g_n,g_n^{-1})
\end{equation}
for any $g_1,\ldots,g_n\in G$.

We take $g_n=1$ in (\ref{eq4}) to get $\sigma(g_1,\ldots,g_{n-1},1)=-\sigma(g_1,\ldots,g_{n-1},1)$ and since $_2A=0$ we obtain $\sigma(g_1,\ldots,g_{n-1},1)=0$. Let $g_1=1$ in (\ref{eq2}) and $g_i=1,1<i<n,$ in (\ref{eq3}) to obtain similarly that $\sigma$ is normalized.
We take $g_2=g_1^{-1}$ in (\ref{eq2}) to obtain the first condition of $(\ast)$; we use that $\sigma $ is normalized. In the same way we take $g_{i+1}=g_i^{-1}, 1<i<n,$ in (\ref{eq3}) and $g_{n-1}=1$ in (\ref{eq4}) to obtain the remaining conditions of $(\ast)$.

For the reverse inclusion let $\sigma\in Z^n(G,A)$ such that $(\ast)$ is true. It follows that $\sigma$ is normalized and satisfies (\ref{eq1}). In (\ref{eq1}) we take $$y_1=g_1,y_2=g_1^{-1},y_3=g_1g_2,y_4=g_3,\ldots,y_{n+1}=g_n$$ to obtain (\ref{eq2}).  By taking in (\ref{eq1}) $$y_1=g_1,\ldots,y_{n-1}=g_{n-1}g_n,y_n=g_n^{-1},y_{n+1}=g_n$$ we obtain $(\ref{eq4})$. Next, we fix $i$ such that $1<i<n$ and let
$$y_1=g_1,\ldots,y_i=g_i,y_{i+1}=g_i^{-1},y_{i+2}=g_ig_{i+1},y_{i+3}=g_{i+2},\ldots,y_{n+1}=g_n.$$
It follows that in the sum from (\ref{eq1}) for $j=i-1$ we obtain
$$(-1)^{i-1}\sigma(g_1,\ldots,g_{i-1}g_i,g_i^{-1},g_ig_{i+1},\ldots,g_n),$$
for $j=i+1$ we obtain
$$(-1)^{i+1}\sigma(g_1,\ldots,g_n)$$
and all the other terms are zero, since $\sigma$ is normalized and satisfies $(\ast)$; hence (\ref{eq3}) is true.
\item[b)] Let $\sigma\in BS^n(G,A)$. Then there is $\beta:G^{n-1}\rightarrow A$ such that for any $y_1,\ldots,y_n\in G$ we have
$$\sigma(y_1,\ldots,y_n)=$$
\begin{equation}\label{eq5} y_1\beta(y_2,\ldots,y_n)+\sum_{j=1}^{n-1}(-1)^j\beta(y_1,\ldots,y_jy_{j+1},\ldots,y_n)+(-1)^n\beta(y_1,\ldots,y_{n-1})
\end{equation}
and
\begin{equation}\label{eq6}\beta(g_1,\ldots,g_{n-1})=-g_1\beta(g_1^{-1},g_1g_2,g_3,\ldots,g_{n-1})
\end{equation}
\begin{equation}\label{eq7}\beta(g_1,\ldots,g_{n-1})=-\beta(g_1,\ldots,g_{i-1}g_i,g_i^{-1},g_ig_{i+1},\ldots,g_{n-1})~~~\text{for}~1<i<n-1
\end{equation}
\begin{equation}\label{eq8}\beta(g_1,\ldots,g_{n-1})=-\beta(g_1,g_2\ldots,g_{n-2}g_{n-1},g_{n-1}^{-1})
\end{equation}
for any $g_1,\ldots,g_{n-1}\in G$.
We take $g_1=1$ in (\ref{eq6}), $g_i=1$ in (\ref{eq7}) and $g_{n-1}=1$ in (\ref{eq8}) to obtain that $\beta$ is normalized. Since $\beta$ is normalized and satisfies (\ref{eq6}), (\ref{eq7}), (\ref{eq8}) the same proof as in the first part of a) assure us that $\sigma$ is a $n$-$(\ast)$-boundary. Since $BS^n(G,A)\subseteq ZS^n(G,A)$ we know from a) that $\sigma$ satisfies $(\ast)$.

For the reverse inclusion let $\sigma\in B^n(G,A)$ be a $n$-$(\ast)$-boundary which satisfies $(\ast)$. So there is $\beta:G^{n-1}\rightarrow A$ normalized such that (\ref{eq5}) is true. In (\ref{eq5}) we consider $$y_1=g_1,y_2=g_1^{-1},y_3=g_1g_2,y_4=g_3,\ldots,y_n=g_{n-1}$$
to obtain (\ref{eq6}). To obtain (\ref{eq8}) we take
$$y_1=g_1,y_2=g_2,\ldots,y_{n-2}=g_{n-2}g_{n-1},y_{n-1}=g_{n-1}^{-1},y_n=g_{n-1}$$
in (\ref{eq5}). For the last condition fix $i$ such that $1<i<n-1$ and let
$$y_1=g_1,\ldots,y_{i-1}=g_{i-1}g_i,y_i=g_i^{-1},y_{i+1}=g_i,\ldots, y_n=g_{n-1}.$$
Now, in the sum from (\ref{eq5}) for $j=i-1$ we obtain
$$(-1)^{-1}\beta (g_1,\ldots,g_{n-1}),$$
for $j=i+1$ we obtain
$$(-1)^{i+1}\beta (g_1,\ldots,g_{i-1}g_i,g_i^{-1},g_ig_{i+1},\ldots,g_{n-1})$$
and all the other terms are zero; hence (\ref{eq7}) is true.
\end{itemize}
\end{proof}
For $n=2$ we have the next corollary which shows us that using normalized cochains the conditions to define symmetric cocycles and coboundaries appears to be more easy to remember and to work with them.

\begin{cor} Let $A$ be a $G$-module such that $_2A=0$. Then:
\begin{itemize}
\item[(a)] $ZS^2(G,A)=Z^2(G,A)\cap\{\sigma:G\times G\rightarrow A\mid \sigma(g,g^{-1})=0, \forall g\in G\}$;
\item[(b)] $BS^2(G,A)=B^2(G,A)\cap\{\sigma:G\times G\rightarrow A\mid \sigma(g,g^{-1})=0, \forall g\in G\}$.
\end{itemize}
\end{cor}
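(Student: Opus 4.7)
The plan is to derive this corollary as the $n=2$ specialization of the preceding proposition. First I would identify what the conditions $(\ast)$ become when $n=2$. The middle range $1<i<n$ is empty, so only the first and last conditions survive; the first reads $\sigma(g_1,g_1^{-1})=0$ and the last reads $\sigma(g_2,g_2^{-1})=0$, both of which collapse to the single requirement $\sigma(g,g^{-1})=0$ for every $g\in G$. Part (a) of the corollary then follows at once from part (a) of the proposition, since $ZS^2(G,A)$ is exactly the set of normalized $2$-cocycles satisfying $(\ast)$.

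For part (b), the inclusion $\subseteq$ is automatic: $BS^2(G,A)\subseteq ZS^2(G,A)$ and part (a) just established places $ZS^2(G,A)$ inside the right-hand side. For the reverse inclusion, given a normalized $\sigma\in B^2(G,A)$ with $\sigma(g,g^{-1})=0$, I would choose a normalized $\beta:G\to A$ with $\partial^2(\beta)=\sigma$ (possible by the usual normalization argument in group cohomology) and show that $\beta$ is automatically a symmetric $1$-cochain. Indeed, evaluating the coboundary formula gives
\[
0=\sigma(g,g^{-1})=g\beta(g^{-1})-\beta(1)+\beta(g)=g\beta(g^{-1})+\beta(g),
\]
so $\beta(g)=-g\beta(g^{-1})$, which is precisely the condition $\beta\in CS^1(G,A)$. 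Hence $\sigma=\partial^2(\beta)\in BS^2(G,A)$.

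I do not anticipate any serious obstacle: the content is really a direct reading of the proposition for $n=2$, together with the observation that for $n=2$ the conditions on the primitive $\beta$ needed to witness $\sigma$ as a member of $BS^2$ are automatically forced by the condition $\sigma(g,g^{-1})=0$ on $\sigma$ itself. The hypothesis $_2A=0$ is inherited through the invocation of the proposition and does not require any separate handling here.
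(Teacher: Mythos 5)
Your proposal is correct and follows essentially the route the paper intends: the corollary is the $n=2$ specialization of the preceding proposition, where the middle $(\ast)$ conditions are vacuous and the remaining two collapse to $\sigma(g,g^{-1})=0$, and your computation showing that the primitive $\beta$ automatically satisfies $\beta(g)=-g\beta(g^{-1})$ is exactly the $n=2$ instance of the argument in part (b) of the proposition's proof. The only nitpick is that for the inclusion $\subseteq$ in (b) you should note $BS^2(G,A)\subseteq B^2(G,A)$ directly (elements of $BS^2$ are boundaries and, by (a), normalized), since part (a) only places $ZS^2(G,A)$ inside $Z^2(G,A)$, not inside $B^2(G,A)$.
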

\section{Symmetric cohomology and Mackey functors}\label{sec3}
In \cite[Corollary 4.2]{Sin} Singh proves that there is a well-defined restriction and inflation map for continuous symmetric cohomology. Using explicit descriptions we will define a restriction, conjugation and transfer map in algebraic context, for symmetric cohomology. Moreover we will investigate when these maps give a Mackey functor; see \cite[\S 53]{TH}. If $\sigma\in CS^n(G,A)\cap\Ker\partial^n$ is a \emph{symmetric cocycle} we denote by $[\sigma]_S\in\Hs^n(G,A)$ its cohomology class.

\begin{nim}\label{ressym}Let $K\leq H\leq G$ and $n\geq 0$ an integer. It is easy to show that
$$r_H^G:\Hs^n(G,A)\rightarrow\Hs^n(H,A);~~~r_H^G([\sigma]_S)=[\res^G_H(\sigma)]_S$$
is a well-defined linear map and satisfies $r^H_K\circ r^G_H=r^G_K.$
\end{nim}
\begin{lem}\label{lemconjtr}Let $H$ be a subgroup of $G$, $g\in G$, $A$ be a $G$-module and $n\geq 0$ be an integer.
\begin{itemize}
\item[(1)] If $\sigma\in CS^n(H,A)$ then $c_{g,H}(\sigma)\in CS^n({}^gH,A)$ and $\tr_H^G(\sigma)\in CS^n(G,A)$.
\item[(2)] The following two diagrams are commutative
\begin{displaymath}
 \xymatrix{CS^n(H,A)\ar[rr]^{c_{g,H}}\ar[d]^{\partial^n_H} && CS^n({}^gH,A) \ar[d]^{\partial^n_{{}^gH}} \\
                                                     CS^{n+1}(H,A)\ar[rr]^{c_{g,H}}
                                                     &&CS^{n+1}({}^gH,A)
                                            },
\end{displaymath}
\begin{displaymath}
 \xymatrix{CS^n(H,A)\ar[rr]^{\tr_H^G}\ar[d]^{\partial^n_H} && CS^n(G,A) \ar[d]^{\partial^n_G} \\
                                                     CS^{n+1}(H,A)\ar[rr]^{\tr_H^G}
                                                     &&CS^{n+1}(G,A)
                                            }.
\end{displaymath}
\end{itemize}
\end{lem}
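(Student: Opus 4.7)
The plan is to prove Part (1) by checking, for each of the three families of generating transpositions of $\Sigma_{n+1}$ listed in \ref{act}, that $c_{g,H}(\sigma)$ and $\tr_H^G(\sigma)$ are fixed by the corresponding action. Part (2) will then follow from the fact that conjugation and transfer are already chain maps on the ordinary cochain complex (see \cite{We}); once $c_{g,H}$ and $\tr_H^G$ are known to preserve the symmetric subcomplexes, the commutativity of the displayed squares is inherited from the ambient diagram on $C^{\ast}$.

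For conjugation the verification is essentially automatic, since $c_{g,H}$ only conjugates arguments and multiplies the output by $g$. For example, to check the $(1,2)$-relation for $c_{g,H}(\sigma)$, one computes $((1,2)c_{g,H}(\sigma))({}^{g}h_1,\ldots,{}^{g}h_n) = -{}^{g}h_1\cdot c_{g,H}(\sigma)({}^{g}(h_1^{-1}),{}^{g}(h_1h_2),{}^{g}h_3,\ldots,{}^{g}h_n) = -gh_1 g^{-1}\cdot g\,\sigma(h_1^{-1},h_1h_2,\ldots,h_n)$, and $(1,2)\sigma=\sigma$ converts this into $g\sigma(h_1,\ldots,h_n) = c_{g,H}(\sigma)({}^{g}h_1,\ldots,{}^{g}h_n)$. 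The two remaining transposition families are handled in the same manner.

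For the transfer the bookkeeping is heavier and is the main obstacle. Fix $c\in G/H$ and, following \ref{tran}, write $h_i := \overline{x_ic}^{-1} g_i \overline{x_{i+1}c}$ for $i<n$ and $h_n := \overline{x_nc}^{-1} g_n \overline{c}$, so a summand of $\tr_H^G(\sigma)(g_1,\ldots,g_n)$ has the form $\overline{x_1 c}\,\sigma(h_1,\ldots,h_n)$. To verify $(1,2)$-invariance, apply the transfer formula to $(g_1^{-1}, g_1g_2, g_3,\ldots, g_n)$; the new products $x_j'$ satisfy $x_1' = x_2$, $x_2' = x_1$ and $x_j' = x_j$ for $j \geq 3$, so the arguments of $\sigma$ in each summand become $(h_1^{-1}, h_1h_2, h_3,\ldots, h_n)$. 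Using $\sigma\in CS^n(H,A)$ together with the identity $\overline{x_2c}\,h_1^{-1} = g_1^{-1}\overline{x_1c}$, each summand rewrites as $-g_1^{-1}\overline{x_1 c}\,\sigma(h_1,\ldots,h_n)$, giving $\tr_H^G(\sigma)(g_1^{-1}, g_1g_2, g_3,\ldots, g_n) = -g_1^{-1}\tr_H^G(\sigma)(g_1,\ldots,g_n)$, which is the desired relation. For a middle transposition $(i,i+1)$, the new products satisfy $x_i' = x_{i+1}$, $x_{i+1}' = x_i$ and $x_j' = x_j$ otherwise, so the arguments of $\sigma$ become $(h_1,\ldots, h_{i-1}h_i, h_i^{-1}, h_ih_{i+1},\ldots, h_n)$, and symmetry of $\sigma$ concludes. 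For $(n,n+1)$ there is an additional twist: $x_j' = x_j g_n^{-1}$ for $j\leq n-1$ and $x_n' = g_n^{-1}$, so the cosets $x_j'c$ differ from $x_j c$; substituting the summation variable via the bijection $c \mapsto g_n c$ of $G/H$ restores the standard shape, the arguments of $\sigma$ then collapse to $(h_1,\ldots, h_{n-2}, h_{n-1}h_n, h_n^{-1})$, and a final application of the symmetric property of $\sigma$ finishes.

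With Part (1) established, the two squares in Part (2) commute because conjugation and transfer are morphisms of cochain complexes in ordinary group cohomology, and the differentials on $CS^{\ast}$ are just the restrictions of the ordinary differentials, so the commutation on $CS^{\ast}$ is inherited from that on $C^{\ast}$.
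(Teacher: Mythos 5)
Your proposal is correct and follows essentially the same route as the paper: part (1) is proved by tracking how each generating transposition permutes the partial products $x_j$ in the transfer sum (with the coset re-indexing $c\mapsto g_nc$ for $(n,n+1)$) and then invoking the $\Sigma_{n+1}$-invariance of $\sigma$, while part (2) is deduced from the corresponding chain-map property in ordinary group cohomology once the symmetric subcomplexes are known to be preserved. The only difference is cosmetic: you write out the conjugation case, which the paper leaves to the reader, and you compute the transfer of the permuted tuple first rather than manipulating inside the sum.
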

\begin{proof}
\begin{itemize}
\item[(1)] We will show the statement for $\tr_H^G$; the similar statement for $c_{g,H}$ is left to the reader. Let $g_1,\ldots,g_n\in H$. We have
\begin{align*}&((1,2)\tr_H^G(\sigma))(g_1,\ldots,g_n)\\&=-g_1\tr_H^G(\sigma)(g_1^{-1},g_1g_2,g_3,\ldots,g_n)\\
&=-g_1\sum_{c\in G/H}\overline{x_2c}\sigma(\overline{x_2c}^{-1}g_1^{-1}\overline{x_1c},\overline{x_1c}^{-1}g_1g_2\overline{x_3c},\overline{x_3c}^{-1}g_3\overline{x_4c},\ldots,\overline{x_nc}^{-1}g_n\overline{c})\\
&=\sum_{c\in G/H}-\overline{x_1c}~\overline{x_1c}^{-1}g_1\overline{x_2c}\sigma((\overline{x_1c}^{-1}g_1\overline{x_2c})^{-1},\overline{x_1c}^{-1}g_1\overline{x_2c}~\overline{x_2c}^{-1}g_2\overline{x_3c},\ldots,\overline{x_nc}^{-1}g_n\overline{c})\\
&=\sum_{c\in G/H}\overline{x_1c}((1,2)\sigma)(\overline{x_1c}^{-1}g_1\overline{x_2c},\overline{x_2c}^{-1}g_2\overline{x_3c},\ldots, \overline{x_nc}^{-1}g_n\overline{c})\\&=\tr_H^G(\sigma)(g_1,\ldots,g_n),
\end{align*}
where the last equality is true since $\sigma\in CS^n(H,A)$.

Let $i$ be an integer such that $1<i<n$.
\begin{align*}
&((i,i+1)\tr_H^G(\sigma))(g_1,\ldots,g_n)\\&=-\tr_H^G(\sigma)(g_1,\ldots, g_{i-1}g_i,g_i^{-1},g_ig_{i+1},\ldots,g_n)\\
&=-\sum_{c\in G/H}\overline{x_1c}\sigma(\overline{x_1c}^{-1}g_1\overline{x_2c},\ldots,\overline{x_{i-1}c}^{-1}g_{i-1}g_i\overline{x_{i+1}c},\\
&\overline{x_{i+1}c}^{-1}g_i^{-1}\overline{x_ic},\overline{x_ic}^{-1}g_ig_{i+1}\overline{x_{i+2}c}\ldots,\overline{x_nc}^{-1}g_n\overline{c})\\
&=\sum_{c\in G/H}\overline{x_1c}((i,i+1)\sigma)(\overline{x_1c}^{-1}g_1\overline{x_2c},\overline{x_2c}^{-1}g_2\overline{x_3c},\ldots, \overline{x_nc}^{-1}g_n\overline{c})\\&=\tr_H^G(\sigma)(g_1,\ldots,g_n),
\end{align*}
where the last equality is true since $\sigma\in CS^n(H,A)$.
Finally for the action of $(n,n+1)$ we obtain
\begin{align*}
&((n,n+1)\tr_H^G(\sigma))(g_1,\ldots,g_n)\\&=-\tr_H^G(\sigma)(g_1,\ldots,g_{n-1}g_n,g_n^{-1})\\
&=-\sum_{c\in G/H}\overline{x_1g_n^{-1}c}\sigma(\overline{x_1g_n^{-1}c}^{-1}g_1\overline{x_2g_n^{-1}c},\ldots,\overline{x_{n-1}g_n^{-1}c}^{-1}g_{n-1}g_n\overline{g_n^{-1}c},\overline{g_n^{-1}c}^{-1}g_n^{-1}\overline{c})\\
\end{align*}
If $c$ runs in $G/H$ then $g_n^{-1}c$ runs in $G/H$ hence we can replace $g_n^{-1}c$ by $c$ and we  continue the above equalities:
\begin{align*}
&((n,n+1)\tr_H^G(\sigma))(g_1,\ldots,g_n)\\&=-\sum_{c\in G/H}\overline{x_1c}\sigma(\overline{x_1c}^{-1}g_1\overline{x_2c},\ldots,\overline{x_{n-1}c}^{-1}g_{n-1}g_n\overline{c},\overline{c}^{-1}g_n^{-1}\overline{g_nc})
\\&=-\sum_{c\in G/H}\overline{x_1c}\sigma(\overline{x_1c}^{-1}g_1\overline{x_2c},\ldots,\overline{x_{n-1}c}^{-1}g_{n-1}\overline{g_nc}~\overline{g_nc}^{-1}g_n\overline{c},(\overline{g_nc}^{-1}g_n\overline{c})^{-1})
\\&=\sum_{c\in G/H}\overline{x_1c}(n,n+1)\sigma(\overline{x_1c}^{-1}g_1\overline{x_2c},\ldots,\overline{x_{n-1}c}^{-1}g_{n-1}\overline{x_nc},\overline{x_nc}^{-1}g_n\overline{c})
\\&=\sum_{c\in G/H}\overline{x_1c}\sigma(\overline{x_1c}^{-1}g_1\overline{x_2c},\ldots,\overline{x_{n-1}c}^{-1}g_{n-1}\overline{x_nc},\overline{x_nc}^{-1}g_n\overline{c})
\\&=\tr_H^G(\sigma)(g_1,\ldots,g_n),
\end{align*}
where the fourth equality is true since $\sigma\in CS^n(H,A)$.

\item[(2)] The proof of that statement (2) follows from the similar result for
the usual group cohomology and statement (1).
\end{itemize}
\end{proof}

Now Lemma \ref{lemconjtr} assure us that there are well-defined linear maps, conjugation and transfer, which we consider in the next definition.
\begin{defn}\label{defconjtr} The \emph{conjugation map} for symmetric cohomology is
$$c_{g,H}:\Hs^n(H,A)\rightarrow \Hs^n(^gH,A),~~~c_{g,H}([\sigma]_S)=[c_{g,H}(\sigma)]_S,$$
and the \emph{transfer map} is
$$t_H^G:\Hs^n(H,A)\rightarrow\Hs^n(G,A),~~~t_H^G([\sigma]_S)=[\tr_H^G(\sigma)]_S,$$
where $\sigma\in CS^n(H,A)$ is a symmetric cocycle.
\end{defn}
\begin{rem} It might be possible that $c_{g,H}$ and $t_H^G$ to be defined as consequences of \cite[Proposition 4.1]{Sin} but we prefer these explicit constructions.
\end{rem}
It is well known that there is a natural map from the symmetric group cohomology to the usual group cohomology  $$i:\Hs^n(G,A)\rightarrow \Hc^n(G,A),~~~ i([\sigma]_S)=[\sigma].$$
\begin{thm}\label{thmmackey} Let $n\geq 0$ be an integer, let $K,H$ be subgroups of $G$ with $K\leq H$ and $g\in G$. If the natural map $i$ is one to one then the family of abelian groups $\{\Hs^n(H,A)\}_{H\leq G}$ together with the linear maps $\{r_K^H,t_K^H,c_{g,H}\}_{K\leq H,g\in G}$ is a Mackey functor.
\end{thm}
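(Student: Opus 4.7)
My plan is to deduce the Mackey axioms on $\Hs^n$ from the corresponding known axioms on ordinary group cohomology $\Hc^n$ by exploiting injectivity of the natural map $i$. The first step is to note that the three operations $r_K^H$, $t_H^G$, $c_{g,H}$ on symmetric cohomology are defined in \ref{ressym} and Definition \ref{defconjtr} by the exact same cochain-level formulas $\res$, $\tr$, $c_g$ used for ordinary cohomology, followed by passage to the symmetric cohomology class (well-definedness being supplied by Lemma \ref{lemconjtr} in the cases of transfer and conjugation). Consequently, $i$ intertwines each symmetric operation with its ordinary counterpart:
$$
i \circ r_K^H = \res_K^H \circ i, \quad
i \circ t_H^G = \tr_H^G \circ i, \quad
i \circ c_{g,H} = c_{g,H} \circ i.
$$

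Next, I would split the Mackey axioms into two groups. Several of them -- transitivity of restriction (already recorded in \ref{ressym}), the conjugation composition law $c_{gh,H} = c_{g,{}^hH}\circ c_{h,H}$, and the compatibility of conjugation with restriction -- are \emph{cochain-level} identities and so transfer immediately from $\Hc^n$ to $\Hs^n$, requiring no injectivity hypothesis. The remaining axioms -- transitivity of transfer $t_H^G \circ t_K^H = t_K^G$, triviality of $c_{h,H}$ for $h \in H$, the conjugation-transfer compatibility, and above all the Mackey double coset formula
$$
r_K^G \circ t_H^G = \sum_{g \in [K \backslash G / H]} t_{K \cap {}^gH}^{K} \circ r_{K \cap {}^gH}^{{}^gH} \circ c_{g, H \cap K^g}
$$
-- are not cochain-level identities (for instance, non-canonical choices of coset representatives enter the right-hand side). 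For each such axiom I would apply $i$ to both sides of the desired equality in $\Hs^n$, push $i$ inside every composition using the three intertwining relations above, invoke the classical identity for $\Hc^n$ (see \cite[\S 53]{TH}) to conclude that the two sides agree after post-composition with $i$, and finally cancel $i$ using injectivity.

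The main obstacle is conceptual rather than technical: it lies in this second class of axioms. One cannot prove them at the cochain level because the coboundaries witnessing the relations in $\Hc^n$ are generally not $\Sigma_{n+1}$-invariant; injectivity of $i$ is precisely the mechanism that avoids having to exhibit symmetric coboundaries. Lifting this hypothesis would require either a direct construction of symmetric coboundaries or a finer analysis of the $\Sigma_{n+1}$-action, which is what makes the question raised in the introduction genuinely open.
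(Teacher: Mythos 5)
Your proposal is correct and follows essentially the same strategy as the paper: observe that $i$ intertwines the symmetric operations with the ordinary ones, verify the purely cochain-level axioms directly, and for the remaining axioms (transitivity of transfer, $c_{h,H}=\mathrm{id}$ for $h\in H$, the Mackey formula) apply $i$, invoke the classical identity in $\Hc^n$, and cancel by injectivity. The only minor divergence is that the paper establishes the conjugation--transfer compatibility in axiom (v) at the cochain level by choosing the coset representatives of ${}^gH/{}^gK$ to be the $g$-conjugates of those of $H/K$, whereas you route it through the injectivity argument; both are valid.
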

\begin{proof} We recall the well-known axioms of a Mackey functor \cite[$\S 53$]{TH} which we want to prove
\begin{itemize}
\item[(i)] $r_L^Kr_K^H=r_L^H;~~~t_K^Ht_L^K=t_L^H$ if $L\leq K\leq H;$
\item[(ii)] $r_H^H=t_H^H=id_{\Hs^n(H,A)}$;
\item[(iii)] $c_{gh,H}=c_{g,^hH}c_{h,H}$ if $g,h\in G$;
\item[(iv)]$c_{h,H}=id_{\Hs^n(H,A)}$ if $h\in H$;
\item[(v)] $c_{g,K}r_K^H=r^{^gH}_{^gK}c_{g,H},~~~c_{g,H}t_K^H=t_{^gK}^{^gH}c_{g,K}$ if $K\leq H$ and $g\in G$;
\item[(vi)] Mackey axiom: if $L,K\leq H$ then
$$r_L^Ht_K^H=\sum_{h\in[L\backslash H /K]}t_{L\cap^hK}^Lr_{L\cap^hK}^{^hK}c_{h,K}.$$
\end{itemize}
The first part of statement (i), statements (ii),(iii) and (v) are easy to verify since the equalities hold for maps of cochain complex in general, the hypothesis that $i$ is injective. We exemplify with (v). Let $\sigma\in CS^n(H,A)$ and $g_1,\ldots,g_n\in K$ and $g\in G$. We have
$$c_{g,K}(\res^H_K(\sigma))({}^gg_1,\ldots,{}^gg_n)=g\res^H_K(g_1,\ldots,g_n)=g\sigma(g_1,\ldots,g_n);$$
$$\res^{{}^gH}_{{}^gK}(c_{g,H}(\sigma))({}^gg_1,\ldots,{}^gg_n)=c_{g,K}(\sigma)({}^gg_1,\ldots,{}^gg_n)=g\sigma(g_1,\ldots,g_n).$$
For the second part let $\sigma\in CS^n(K,A)$ and ${}^gg_1,\ldots,{}^gg_n\in{}^gH$. By abuse of notation we use the same notations for $t_K^H$ as in \ref{tran}.
\begin{align*}
c_{g,H}(\tr_K^H(\sigma))&({}^gg_1,\ldots,{}^gg_n)\\&=g\tr_K^H(g_1,\ldots,g_n)\\&=g\sum_{c\in H/K}\overline{x_1c}\sigma(\overline{x_1c}^{-1}g_1\overline{x_2c},\overline{x_2c}^{-1}g_2\overline{x_3c},\ldots,\overline{x_nc}^{-1}g_n\overline{c})
\end{align*}
It is clear that if $H=\bigcup_{c\in H/K}c$ then ${}^gH=\bigcup_{{}^gc\in{}^gH/{}^gK}{}^gc$ and $\overline{{}^gc}={}^g\overline{c}$ for any $c\in H/K$. Hence we have
\begin{align*}
\tr_{{}^gK}^{{}^gH}(c_{g,K}(\sigma))&({}^gg_1,\ldots,{}^gg_n)\\&=\sum_{{}^gc\in ^gH/{}^gK}\overline{{}^gx_1{}^gc}~~c_{g,K}(\sigma)(\overline{{}^gx_1{}^gc}^{-1}{}^gg_1\overline{{}^gx_2{}^gc},\ldots,\overline{{}^gx_n{}^gc}^{-1}{}^gg_n\overline{{}^gc})\\&=
\sum_{^gc\in^gH/{}^gK}g\overline{x_1c}g^{-1}g\sigma(\overline{x_1c}^{-1}g_1\overline{x_2c},\ldots,\overline{x_nc}^{-1}g_n\overline{c})\\&=
g\sum_{c\in H/K}\overline{x_1c}\sigma(\overline{x_1c}^{-1}g_1\overline{x_2c},\ldots,\overline{x_nc}^{-1}g_n\overline{c})
\end{align*}
For (iv) we need to prove that $[c_{h,H}(\sigma)]_S=[\sigma]_S$ if $h\in H$ and $\sigma$ is a symmetric cocycle. Since for the usual group cohomology we have that $[c_{h,H}(\sigma)]=[\sigma]$, using the injectivity and the definition of $i$  we are done. With some technical adjustments similar arguments work for (vi) and the second part of (i), again using the injectivity of the map $i$.
\end{proof}

By \cite[Proposition 4.1]{Starc} and Theorem \ref{thmmackey} we have the next corollary.
\begin{cor}
Let $n\geq 0$ be an integer , let $K,H$ be subgroups of $G$ with $K\leq H$ and $g\in G$. Let $A$ be a $G$-module such that $n+1$ is not a zero divisor and the equation $n!x=a$ has a unique solution. Then the family of abelian groups $\{\Hs^n(H,A)\}_{H\leq G}$ together with the linear maps $\{r_K^H,t_K^H,c_{g,H}\}_{K\leq H,g\in G}$ is a Mackey functor.
\end{cor}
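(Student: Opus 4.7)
The plan is to deduce the corollary by combining Theorem \ref{thmmackey} with \cite[Proposition 4.1]{Starc}. Since Theorem \ref{thmmackey} already packages all the Mackey functor axioms under the single hypothesis that the natural map $i: \Hs^n(H,A) \to \Hc^n(H,A)$ is injective, the entire task reduces to verifying that injectivity under the given arithmetic conditions on $A$.

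First I would observe that both conditions imposed on $A$---namely that $n+1$ is not a zero divisor in $A$ and that the equation $n!x = a$ admits a unique solution for every $a \in A$---concern only the underlying abelian group structure of $A$ together with the endomorphisms given by multiplication by $n+1$ and by $n!$. In particular, these properties do not depend on the group action at all, so they are automatically inherited by $A$ regarded as an $H$-module for every subgroup $H \leq G$, and likewise for subgroups of $H$.

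Next, I would invoke \cite[Proposition 4.1]{Starc}, which under exactly these two hypotheses establishes the injectivity of the natural map $i_H: \Hs^n(H,A) \to \Hc^n(H,A)$. Applying it to each subgroup appearing in the Mackey formalism produces the injectivity assumption that Theorem \ref{thmmackey} demands.

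With the injectivity of $i$ in hand for all the relevant subgroups, I would then simply quote Theorem \ref{thmmackey} to conclude that the family $\{\Hs^n(H,A)\}_{H\leq G}$ together with $\{r_K^H, t_K^H, c_{g,H}\}$ satisfies the Mackey functor axioms. There is no real obstacle here: all the substantive work---constructing $c_{g,H}$ and $t_K^H$, checking compatibility with the differentials, and deducing the axioms from their ordinary cohomology analogues via the injective comparison---has already been carried out in Lemma \ref{lemconjtr} and Theorem \ref{thmmackey}, so the corollary is a formal consequence once the arithmetic hypothesis on $A$ is translated into the injectivity of $i$ on every subgroup.
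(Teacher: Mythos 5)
Your proposal is correct and follows exactly the paper's route: the paper deduces the corollary in one line by combining Theorem \ref{thmmackey} with \cite[Proposition 4.1]{Starc}, which supplies the injectivity of $i$ under the stated arithmetic hypotheses on $A$. Your additional observation that these hypotheses depend only on the underlying abelian group of $A$, and hence hold for every subgroup, is a helpful explicit remark that the paper leaves implicit.
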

\begin{rem}\label{remopenpb}We observe from the proof of Theorem \ref{thmmackey} that some axioms of the definition of the Mackey functor for symmetric cohomology are proved in general without the assumption that $i$ is injective. It is an open problem if all the axioms can be proved in general, without this assumption.
\end{rem}
\textbf{Acknowledgements.} The author is grateful to the referee for his kind and valuable comments which contribute to major improvements of a previous version of this article.





\end{document}